\documentclass{article}

\usepackage{graphicx} 
\usepackage{float}
\usepackage[main=english]{babel}
\usepackage{amsmath, amsthm}
\usepackage{amsfonts} 
\usepackage{enumitem}
\usepackage{appendix} 
\usepackage{makecell}
\usepackage{longtable}
\newtheorem{theorem}{Theorem}
\newtheorem{lemma}[theorem]{Lemma} 
 
\newtheorem{example}[theorem]{Example} 
\newtheorem{definition}[theorem]{Definition} 
\newtheorem{remark}[theorem]{Remark}

\title{A Resource Allocation Tournament\thanks{This work was supported by the Israel Science Foundation [Grant 2718/25].}}
\author{Adam Nathaniel\footnote{Department of Statistics and Operations Research, School of Mathematical Sciences, Tel Aviv University. e-mail: adamnathaniel6@gmail.com.}~ and Refael Hassin\footnote{Department of Statistics and Operations Research, School of Mathematical Sciences, Tel Aviv University. e-mail: hassin@tauex.tau.ac.il.}}

\date{2026}

\begin{document}

\setcellgapes{3pt}   
\makegapedcells       

\maketitle

\begin{abstract}

We introduce and analyze a round-robin simultaneous resource allocation contest, motivated by sports leagues and other competitive environments where agents must distribute limited resources across multiple encounters. Players allocate resources in advance to matches against all opponents, and match outcomes are determined by relative allocations. We study two payoff schemes: total points, where players accumulate rewards for each win, and winner-takes-all, where only the tournament winner is rewarded. For three players, we provide complete equilibrium characterizations under both schemes. For four or more players, we show that the total points payoff admits Blotto-type equilibria, while the winner-takes-all payoff requires numerical and algorithmic analysis. To address this, we develop a best-response dynamics algorithm with stochastic inertia and use it to approximate equilibria in games with four and five players. 

\end{abstract}

\noindent \textbf{Keywords:} Game Theory, Contest Theory, Sports Analytics, Colonel Blotto Game, Best Response Dynamics.

\section{Introduction}

We define and analyze a game motivated by sports leagues. We model a round-robin tournament as a contest in which teams compete for a prize by distributing costly resources, such as energy, across matches. The analysis examines how teams should optimally allocate their resources throughout the course of the competition.

The scope of resource allocation competitions goes far beyond sports. Similar strategic problems arise in economics, where firms must decide how to allocate limited budgets across research and development projects or advertising campaigns. In political science, election campaigns can be viewed as contests in which parties distribute their resources over time to maximize electoral success. Analogous dynamics also appear in international relations, for instance, in arms races, where countries repeatedly allocate effort to maintain or gain advantage. Thus, while sports provide natural and intuitive motivation, the underlying allocation problem is relevant in a wide range of settings.

In multi-stage contests, the resource allocation can be done either simultaneously, where players commit to their allocations for all stages in advance without knowing the outcomes of earlier stages, or sequentially, where allocations are chosen stage by stage, with players observing the outcomes of previous stages before making subsequent decisions. Although sequential contests are more common, simultaneous resource allocation problems have many real-life applications. In what follows, we discuss several such applications, before presenting our simultaneous resource allocation framework.

In sports, a clear example arises in pre-season planning, where teams design training-load or effort-allocation programs for the entire season in advance, committing ex ante to a distribution of resources across competitions or time periods, with only limited ability to adjust once the season begins. Similarly, roster construction and budget allocation across positions before the season begins can be viewed as a simultaneous resource allocation problem, since most decisions are made prior to observing match outcomes.

Simultaneous auctions have also been intensively studied. The literature includes theoretical analysis of equilibrium behavior and social welfare properties in simultaneous auctions (Krishna and Rosenthal, 1996; Feldman et al., 2013), as well as comparisons between simultaneous and sequential auctions from the perspective of the auction designer (Hausch, 1986). In parallel, a substantial body of work is devoted to the design and implementation of simultaneous auctions for spectrum allocation (McAfee and McMillan, 1996; Milgrom, 2000; Cramton, 2013). Algorithmic approaches to bidding in simultaneous auctions have also been studied (Greenwald and Boyan, 2001). 

In contrast to our framework, in these auction settings, only winning bidders incur payments, whereas unsuccessful bidders do not bear any cost. A notable exception is the waiting-line auction (Holt and Sherman, 1982), which studies the allocation of commodities on a first-come, first-served basis. Consumers compete along a non-price dimension, time, closely resembling the non-price dimension of effort in our framework.

A sports-related simultaneous auction can be seen in the 2026 FIFA World Cup ticketing process, where fans enter a simultaneous ballot, submitting binding purchase requests for all matches in which they are interested. While such load management allocation problems can also be addressed through sequential mechanisms in cyclic queueing games (Hassin and Nathaniel, 2021), the choice of a simultaneous ballot introduces unique strategic complexities. Specifically, fans may fear winning too many tickets, i.e., obtaining more tickets than they can afford or wish to purchase, which may lead them to bid conservatively. A closely related concern appears in McAfee and McMillan (1996), which shows that in simultaneous auctions, budget-constrained bidders strategically shade their bids due to the risk of winning more items than desired. Although our model also incorporates a budget constraint, the fact that all players incur costs regardless of winning eliminates incentives to limit the number of wins.

Additional simultaneous mechanisms in which players incur costs regardless of the outcome include rent seeking activities (Tullock, 1980; Congleton et al., 2008; Fonseca, 2009). In the efficient rent seeking model, simultaneity is imposed under the presumption that all players have an incentive to act as first movers. Early work (Krueger, 1974) emphasizes the competitive nature of rent seeking, demonstrating that the associated welfare losses may substantially exceed those generated by traditional trade restrictions. Subsequent research (Baye and Hoppe, 2003; Ahn et al., 2010) highlights the strategic equivalence between rent seeking contests and tournament-style games, such as innovation tournaments in which only the best entrepreneur can go to the market. Although the literature predominantly adopts a simultaneous-move framework, Glazer and Hassin (2000) study sequential rent seeking and show that early movers do not necessarily earn higher profits and that aggregate profits may be lower than under simultaneity.

Another area close to simultaneous resource allocation contests is the course bidding mechanism, where students use virtual points to bid for limited seats in popular courses. Krishna et al. (2004) report a field experiment at the University of Michigan Business School, comparing different course allocation systems. Krishna and Unver (2008) complement this with laboratory evidence in controlled settings. Sonmez and Unver (2010) highlight the conceptual problem that bids used both for preference revelation and priority allocation can distort outcomes. More recently, Juthamanee et al. (2021) and Laokondee et al. (2021) applied deep learning models to optimize course allocation. 

Together, these applications illustrate the practical relevance of strategic resource allocation in simultaneous strategic interactions. Rent seeking activities resemble our framework in settings where agents optimize tournament rank, while course bidding mechanism resembles the setting where agents optimize individual outcomes. 

At a formal level, such problems are naturally studied within the broader framework of contest theory, to which we now turn.

Contest theory (Glazer and Hassin, 1988; Moldovanu and Sela, 2001) is a branch of game theory that studies situations where players expend costly resources (effort, money, time, etc.) to compete for a prize. As in auctions, the outcomes are determined by relative bids or efforts. The key difference from auctions is that, in contests, all participants bear the cost of their actions, even if they do not win the prize.

Contests may differ in various aspects, such as the tournament format and the mechanism used to determine match outcomes. In particular, in standard profit-maximization contests, players aim to maximize their profit, typically defined as the expected reward minus their chosen cost. In contrast, in resource allocation contests, players are constrained to spend their entire resources (or cost), and thus aim only to maximize their expected reward through the optimal allocation of that fixed budget. This distinction is especially relevant in sports economics, where professional soccer clubs have been shown to follow win-maximization strategies subject to budget constraints rather than pure profit-maximization (Garcia-del-Barrio and Szymanski, 2009).

To the best of our knowledge, our model, formally presented in the next section, has not previously been studied. In what follows, we review related work.

Resource allocation design has been studied in two-player contests with the Tullock mechanism (Tullock, 1980), where if Player $i$ allocates $x_i$ resources in a match, their probability of winning the match is $\frac{x_i}{x_1+x_2}$. Sela and Erez (2013) analyze contests in which the two players compete against each other in $n$ different stages, having winning values for each stage. Sela (2023) analyzes the best-of-two and best-of-three tournaments, where only the winner of the tournament receives a prize.

The Colonel Blotto game (Borel, 1921) can be seen as a variation of the two-player resource allocation contest with the all-pay mechanism, where if Player $i$ allocates $x_i$ resources in a match, they win the match against Player $j$ if $x_i>x_j$. The two players compete against each other in $n$ simultaneous matches on $n$ battlefields, and their reward is equal to the total number of matches they win. In its multiplayer version (Boix-Adsera et al., 2020), $k$ players allocate resources over $n$ battlefields, where each player decides the amount of resources to allocate to each battlefield.

Round-robin tournaments with more than two players, where players do not have resource budgets as in our model, have also been studied. Krumer et al. (2017) analyze round-robin tournaments with three and four symmetric players, where the matches are all-pay contests, and only the winner of the tournament receives a prize. The matches are played sequentially, and it is shown that asymmetry in the order of the matches leads to asymmetry in the winning probability of the tournament among the players. Dagaev and Zubanov (2022) consider a round-robin sequential resource allocation tournament with three players and compare different distributions of prizes, some of them leading to equal payoffs in equilibrium. 

In addition to theoretical work, Avrahami et al. (2013) conducted an experimental study of resource allocation in a two-player contest over many fields, comparing behavior in symmetric and asymmetric setups. They show that weaker players often abandon some fields to focus resources and maintain a chance of occasionally winning.

In contrast to the models surveyed above, we consider a multiplayer round-robin resource allocation tournament with simultaneous all-pay matches.

The remainder of this paper is organized as follows. Section 2 presents our model and suggests two payoff schemes: total points and winner-takes-all. Sections 3-6 analyze each payoff scheme, first in the three-player game and then in the game with four or more players. Section 7 concludes the paper.

\section{Model}

We consider a symmetric game with $n$ players. The players compete in a round-robin tournament, facing each of the other $n-1$ players exactly once. Hence, each player participates in $n-1$ matches.

Each player is endowed with a total of one unit of resources to be allocated across their matches. Formally, a strategy is an allocation vector $(x_1, \ldots, x_{n-1})$ with $\sum_{i=1}^{n-1} x_i = 1$, where $x_i$ denotes the resources assigned to the $i$-th match. 

Players choose their allocation strategy simultaneously and in advance, without knowing the identity of their opponents in each match or the order in which the matches occur. In particular, since all strategies are determined before the competition begins, players cannot observe the outcomes of any matches.

The outcome of a match is determined by the relative resource allocations: the player who allocates more resources wins the match and receives $1$ point; if both allocate the same amount, the match ends in a draw and each player receives $0.5$ point.

The payoffs of the players in the tournament are based on their points. We consider two payoff schemes: 

\begin{itemize} 
    \item \textbf{Total points payoff}: The payoff for each player equals the number of points accumulated throughout the tournament.
    \item \textbf{Winner-takes-all payoff}: A reward of one unit is shared equally among all players with the highest total of points at the end of the tournament. 
\end{itemize}

Since the tournament schedule is random, we evaluate the expected payoffs in all possible permutations of opponents and match orders. Thus, in the total points payoff scheme we calculate each player’s expected total points, and in the winner-takes-all payoff scheme we calculate the winning probability of each player.

We look for a symmetric Nash equilibrium strategy. Because the two-player case is trivial, we begin with the three-player game for each payoff scheme.

\section{Total points payoff: three players}
In this section, we consider a three-player game in which each player faces the other two exactly once and allocates their resources between the two matches. The players aim to maximize their expected total number of points earned throughout the tournament.

Pure strategies are of the form $(x, 1 - x)$. Since players must decide on their allocation strategy without knowing the identity of their opponent in a given match, we may, without loss of generality, restrict attention to $x \in \left[\frac{1}{2}, 1\right]$ to avoid duplication of strategies.

\begin{theorem}\label{thm:basic}
Any resource allocation strategy of the form $(x, 1 - x)$, where $x \in \left[\frac{1}{2}, 1\right]$, is a symmetric equilibrium.
\end{theorem}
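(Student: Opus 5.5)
The plan is to fix the two opponents' strategy at $(x,1-x)$ with $x\in[\frac12,1]$, and show that every pure strategy $(y,1-y)$ of the deviating player earns the same expected total points, namely $1$. Since a mixed deviation is an average of pure ones, this is enough to establish equilibrium.

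\textbf{Random assignment.} The key modelling point is that the schedule is random. Each opponent faces our player with their $x$-component or their $(1-x)$-component, each with probability $\frac12$. Likewise, each of our player's components $y$ and $1-y$ is assigned to a given opponent with probability $\frac12$. By linearity of expectation, the expected total equals the sum over our player's two components $z\in\{y,1-y\}$ of the expected points $z$ earns against a random opponent component. That opponent component is $x$ or $1-x$ with probability $\frac12$ each. Hence
\[
E(y)=\tfrac12\bigl[p(y,x)+p(y,1-x)+p(1-y,x)+p(1-y,1-x)\bigr],
\]
where $p(a,b)$ is $1$, $\frac12$ or $0$ according as $a>b$, $a=b$ or $a<b$.

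\textbf{Pairing argument.} I would group the four terms into the pairs $p(y,x)+p(1-y,1-x)$ and $p(y,1-x)+p(1-y,x)$. The identity $p(a,b)+p(1-a,1-b)=1$ holds because $a>b$ if and only if $1-a<1-b$, and ties are preserved. Applying it with $(a,b)=(y,x)$ and with $(a,b)=(y,1-x)$ shows each pair sums to $1$. Therefore $E(y)=\frac12\cdot 2=1$ for every $y$.

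\textbf{Conclusion.} The payoff is constant in the deviator's strategy, so no deviation is profitable. The same computation with $y=x$ gives the equilibrium payoff $1$, which is also forced by symmetry, since the $3$ points in total are split equally. Thus every $(x,1-x)$ is a symmetric equilibrium.

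\textbf{Main obstacle.} The delicate step is justifying the independence and uniform-$\frac12$ assignment used in the first paragraph. The paper's phrase ``evaluate the expected payoffs in all possible permutations'' has to be read as uniform randomization over which component goes to which opponent. I would make this explicit by averaging over the $2\times2\times2$ equally likely orientation choices. Under that averaging, the marginal law of each match pairing is uniform over the four component combinations, and linearity then gives the formula for $E(y)$ without any independence issues.
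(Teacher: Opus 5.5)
Your proof is correct. It shares the paper's overall frame: fix the opponents at $(x,1-x)$ and show that every deviation $(y,1-y)$ earns exactly $1$. The mechanism for getting that $1$ is different, though.

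\emph{The paper's route.} The paper splits into the cases $y=x$, $y<x$ and $y>x$. For $y\neq x$ it uses the strict chains $x>y\ge 1-y>1-x$ and $y>x\ge 1-x>1-y$ to read off win probabilities of $\frac12,\frac12$ or $1,0$. It disposes of $y=x$ by symmetry.

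\emph{Your route.} You average over the random assignment and pair the four terms using the reflection identity $p(a,b)+p(1-a,1-b)=1$.

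\emph{What each buys.} The paper's case analysis is more concrete: it shows how the payoff of $1$ is composed, either two coin flips or a sure win plus a sure loss. Your identity is more uniform and more general:
\begin{itemize}
\item ties, including $y=x$ and $x=\frac12$, need no separate treatment;
\item the normalization $x,y\ge\frac12$ is unnecessary;
\item the pairing is done opponent by opponent, so the same computation gives payoff $1$ even when the two opponents use different, or mixed, strategies.
\end{itemize}
The last point is exactly the content of the Remark after the theorem. The paper only sketches that Remark as ``a calculation analogous'' to the eight-configuration count. Your argument proves it outright: every player's expected payoff is identically $1$ in every profile.

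Your reading of the random schedule is the same one the paper uses, namely the eight equally likely orientation configurations. So the ``main obstacle'' you flag is not an obstacle.
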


\begin{proof}
Denote my strategy by $(y, 1 - y)$, where $y \in \left[\frac{1}{2}, 1\right]$, and denote the strategy of each of the other two players by $(x, 1 - x)$, where $x \in \left[\frac{1}{2}, 1\right]$. Let $P_0$ be my expected payoff and $P_1$ be the expected payoff of each of the other two players.

A total of three matches take place and, therefore, the sum of the players' payoffs satisfies \( P_0 + 2P_1 = 3 \). In a symmetric equilibrium, we have \( P_0 = P_1 = 1 \).

We show that any response strategy $(y, 1 - y)$ yields the same payoff $P_0=1$ and therefore it is also a best response.

In the case where \( x = y \), it is straightforward that \( P_0 = P_1 = 1 \).

A strategy with \( y < x \) satisfies \( x > y \geq 1 - y > 1 - x \), and I win the match to which I allocate \( y \) units with probability \( \frac{1}{2} \), since I win against \( 1 - x \), lose against \( x \) and encounter each of them with equal probability. 

Independently, as I play against a different player, I also win the match to which I allocate \( 1 - y \) units with probability \( \frac{1}{2} \), by the same reasoning: I win against \( 1 - x \), lose against \( x \), and face each of them with equal probability.

Similarly, strategy \( y > x \) satisfies \( y > x \geq 1 - x > 1 - y \), and I win the match to which I allocate \( y \) units with probability \( 1 \), and I win the match to which I allocate \( 1 - y \) units with probability \( 0 \).

In conclusion, in the case \( y < x \), my expected payoff is \( P_0 = \frac{1}{2} + \frac{1}{2} = 1 \), and in the case \( y > x \), my expected payoff is \( P_0 = 1+0=1 \) as well.

Therefore, my strategy $(y, 1 - y)$ is always a best response against $(x, 1 - x)$. Specifically, the strategy with $y = x$ is a best response, and therefore $(x, 1 - x)$ is a symmetric equilibrium.
\end{proof}

\begin{remark}
More generally, not only does any pure strategy constitute a symmetric equilibrium, but any profile of pure strategies constitutes an equilibrium, and therefore any mixed strategy constitutes a symmetric equilibrium.
\end{remark}

\begin{proof}
We have already shown that, in the case that two players play the same pure strategy and one player plays a different pure strategy, the payoffs of all players are equal. In the case of three players playing three different pure strategies, there are eight equally likely configurations corresponding to each player randomly allocating their resources between the two opponents. By a calculation analogous to that in the proof of Theorem~\ref{thm:basic}, it can be shown that the payoffs of all players are equal in this case as well.
\end{proof}

\section{Total points payoff: four or more players}
In this section, we consider a game with four or more players. The players aim to maximize their expected total number of points earned throughout the tournament. 

We begin the section with an analysis of the four-player game in which each player faces the other three exactly once and allocates their resources over the three matches. 

Pure strategies are of the form $(x_1, x_2, x_3)$, where $x_1 + x_2 + x_3 = 1$. Since players must decide on their allocation strategy without knowing the identity of their opponent in a given match, we may, without loss of generality, restrict attention to strategies satisfying $x_1 \geq x_2 \geq x_3$ to avoid duplication.

Unlike the three-player variation, in the case of four players, not every strategy of the form \( (x_1, x_2, x_3) \) constitutes a symmetric equilibrium. In fact, none of them does, and the game admits no pure symmetric equilibrium.

\begin{lemma}\label{thm:lemma1}
In the four-player game, there does not exist a pure symmetric equilibrium.
\end{lemma}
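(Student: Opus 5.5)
Fix a candidate pure symmetric profile in which every opponent plays $(x_1,x_2,x_3)$ with $x_1\ge x_2\ge x_3$ and $x_1+x_2+x_3=1$. I will exhibit a deviation $(y_1,y_2,y_3)$ that earns strictly more than $3/2$. The benchmark is $3/2$ because in the symmetric profile the six matches distribute $6$ points equally among four players. Since opponents are assigned to my matches uniformly at random, my amount $y_k$ meets an opponent whose relevant coordinate is uniform over $\{x_1,x_2,x_3\}$. Each match therefore contributes
\[
\tfrac13\sum_{j=1}^3 \bigl(\mathbf 1[y_k>x_j]+\tfrac12\mathbf 1[y_k=x_j]\bigr),
\]
and by linearity my expected payoff is the sum of these three contributions. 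So the question reduces to a one-player optimization against a fixed empirical distribution of three values.

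The argument splits into cases on the ordering structure of $x$.

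\textbf{All three values distinct, $x_1>x_2>x_3$.} Play $y=(x_1+\varepsilon,\,x_2+\varepsilon,\,x_3-2\varepsilon)$ for small $\varepsilon>0$, which requires $x_3>0$. Against the current profile the score is $\tfrac12$ per match, i.e.\ win one, draw one, lose one in expectation. After the deviation the first amount beats all three values, the second beats two, and the third beats none. The payoff is $(3+2+0)/3=5/3>3/2$. If instead $x_3=0$, play $(x_1-\varepsilon,\,x_2+\varepsilon/2,\,\varepsilon/2)$. Then $x_1-\varepsilon$ still beats $x_2$ and $0$, giving $2/3$; $x_2+\varepsilon/2$ also gives $2/3$; and $\varepsilon/2$ beats $0$, giving $1/3$. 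The total is $5/3$.

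\textbf{Ties among the values.} The representative profiles to check are $(1/3,1/3,1/3)$, $(a,a,1-2a)$ with $a\neq1/3$, $(1-2b,b,b)$, and degenerate ones such as $(1,0,0)$ and $(1/2,1/2,0)$.
\begin{itemize}
\item Against $(1/3,1/3,1/3)$, play $(1/3+\varepsilon,\,1/3+\varepsilon,\,1/3-2\varepsilon)$, which wins two matches for a payoff of $2$.
\item Against $(a,a,c)$ with $a>c$, play $(a+\varepsilon,\,a+\varepsilon,\,c-2\varepsilon)$ when $c>0$. This gives $1+1+0=2$.
\item Against $(1/2,1/2,0)$, play $(1/2,\,1/2-\varepsilon,\,\varepsilon)$. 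Here $1/2$ scores $(1/2+1/2+1)/3=2/3$, $1/2-\varepsilon$ scores $1/3$, and $\varepsilon$ scores $1/3$, for a total of $4/3$; this is worse. Play $(1/2+\varepsilon,\,1/2-2\varepsilon,\,\varepsilon)$ instead, which scores $1+1/3+1/3=5/3$.
\item Against $(1-2b,b,b)$, play $(1-2b-2\varepsilon,\,b+\varepsilon,\,b+\varepsilon)$. Each $b+\varepsilon$ scores $2/3$, and the large amount still beats $b$ whenever $1-2b>b$, scoring $2/3$. The total is $2$.
\item Against $(1,0,0)$, play $(1-2\varepsilon,\,\varepsilon,\,\varepsilon)$. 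This gives $2/3+2/3+2/3=2$.
\end{itemize}

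The general principle behind every case is the same. Shave a small amount from a coordinate whose ranking against the support $\{x_j\}$ does not change under a small decrease, and use it to push other coordinates just above ties or above opponent values. The main obstacle is organizing the boundary cases, namely zero coordinates and equalities, so that the perturbation direction always exists. I would handle this uniformly with a lemma: for any three values with at most two equal to zero, some small perturbation strictly increases the number of "strictly above" relations without losing any. The exception is when the shaved coordinate equals the smallest value and that value is positive; in that case it loses only against that value while the two raised coordinates each gain more. Verifying this counting inequality case by case completes the proof.
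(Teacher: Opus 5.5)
Your proof is correct and follows essentially the same route as the paper: you establish the benchmark $3/2$ from splitting the six points, then exhibit in each ordering case an explicit small perturbation earning $5/3$ or $2$. The differences are cosmetic: in the generic case you shave $x_3$ where the paper shaves $x_1$, which is why you need the extra $x_3=0$ patch that the paper's $(x_1-2\epsilon,x_2+\epsilon,x_3+\epsilon)$ avoids; and since your listed families already exhaust all profiles with $x_1\ge x_2\ge x_3$, the closing ``uniform lemma'' paragraph is unnecessary.
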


\begin{proof}

Let \( (x_1, x_2, x_3) \) be the strategy played by each of the other three players, where \( x_1 \geq x_2 \geq x_3 \) and \( x_1 + x_2 + x_3 = 1 \). Then, for sufficiently small \( \epsilon > 0 \), the strategy \( (x_1 - 2\epsilon, x_2 + \epsilon, x_3 + \epsilon) \) yields a strictly higher expected payoff for me than the strategy \( (x_1, x_2, x_3) \).

Consider the case \( x_1 > x_2 > x_3 \). Let $\epsilon = \frac{1}{4} \min(x_1 - x_2,\, x_2 - x_3)$. Then the following inequalities hold: $x_1 > x_1 - 2\epsilon > x_2 + \epsilon > x_2 > x_3 + \epsilon > x_3$.

Denote my expected payoff by $P_0$, and denote the expected payoff of each of the other three players by $P_1$. A total of six matches are played, so the sum of all payoffs satisfies \( P_0 + 3P_1 = 6 \). Clearly, under my strategy \( (x_1, x_2, x_3) \) we have \( P_0 = P_1 = 1\frac{1}{2} \).

Now consider my strategy \( (x_1 - 2\epsilon,\, x_2 + \epsilon,\, x_3 + \epsilon) \). I win the match to which I allocate \( x_1 - 2\epsilon\), \( x_2 + \epsilon\) and \( x_3 + \epsilon\) units with probabilities \( \frac{2}{3} \), \( \frac{2}{3} \) and \( \frac{1}{3} \), respectively/ 

Therefore, my expected payoff under this strategy is $P_0 = \frac{2}{3} + \frac{2}{3} + \frac{1}{3} = 1\frac{2}{3} > 1\frac{1}{2} $, as claimed.

Similar arguments apply in the cases where \( x_1 = x_2 \), \( x_2 = x_3 \), or \( x_1 = x_2 = x_3 = \frac{1}{3} \). Against the strategy \((x, x, 1 - 2x)\), where \( x \in \left(\frac{1}{3}, \frac{1}{2}\right] \), I can respond with \((x + \epsilon, x - 2\epsilon, 1 - 2x + \epsilon)\), and receive a payoff of $1\frac{2}{3}$. Against the strategy \((1 - 2x, x, x)\), where \( x \in \left[0, \frac{1}{3}\right) \), I can respond with \((1 - 2x - 2\epsilon, x + \epsilon, x + \epsilon)\), and receive a payoff of $2$. Finally, against \((\frac{1}{3}, \frac{1}{3}, \frac{1}{3})\), I can respond, for example, with \((\frac{1}{2}, \frac{1}{2}, 0)\), and receive a payoff of $2$.

\end{proof}

The essence of the proof is that it is profitable to be better than the opponents in two out of the three coordinates and worse in the third coordinate. The following example shows how the better response dynamics described above can lead to cyclical behavior.

\begin{example}
Suppose that the other three players play $(0.34, 0.33, 0.33)$.  This is not an equilibrium, as for me a better response is, for example, $(0.43, 0.43, 0.14)$. If they adopt this new strategy, my response becomes $(0.53, 0.24, 0.23)$. They may continue to follow, allowing me, in turn, to return to $(0.34, 0.33, 0.33)$ and gain an advantage once again.
\end{example}

Therefore, we seek a mixed equilibrium strategy. Specifically, we define a property, construct a distribution that satisfies this property, and prove that the property constitutes a sufficient condition for equilibrium.

\begin{definition}
For a game with \( n+1 \)-players, a mixed strategy $(X_1, X_2, \ldots, X_n)$ is called \emph{Blotto} if its joint probability distribution satisfies the following conditions:
\begin{enumerate}
    \item \( \displaystyle \sum_{i=1}^{n} X_i = 1 \),
    \item Each marginal distribution is uniform on the interval \( \left[0, \frac{2}{n}\right] \); that is, \( X_i \sim \mathrm{U}\left[0, \frac{2}{n}\right] \) for all \( i \in \{1, \ldots, n\} \).
\end{enumerate}
\end{definition}

A Blotto strategy was first presented by Borel and Ville (1938). In our setup, this corresponds to an \( n+1 \)-player game, whereas the paper analyzes the Colonel Blotto game, involves two players competing over $n$ battlefields. 

Roberson (2006) and Weinstein (2012) provide an elegant construction of the case $n=3$. Consider a trivariate distribution that places a uniform probability mass \( \frac{1}{2} \) on each of the following two line segments: from \( (0, \frac{2}{3}, \frac{1}{3}) \) to \( (\frac{2}{3}, \frac{1}{3}, 0) \), and from \( (0, \frac{1}{3}, \frac{2}{3}) \) to \( (\frac{2}{3}, 0, \frac{1}{3}) \). This defines a Blotto strategy for the case $n=3$. Weinstein (2012) further generalizes the construction to arbitrary $n$.

\begin{theorem}
For an \( n+1 \)-player game, any Blotto strategy constitutes a mixed symmetric equilibrium.
\end{theorem}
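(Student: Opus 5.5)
Suppose the other $n$ players all use a Blotto strategy, and I deviate to an arbitrary pure strategy $(y_1,\ldots,y_n)$ with $y_i\ge 0$ and $\sum_i y_i=1$. The plan is to show that my expected total points do not depend on my allocation. Since the other players are symmetric, that common value must be $n/2$, and in particular playing a Blotto strategy myself is a best response. A mixed deviation is just an average over pure ones, so pure deviations suffice.

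The first step is to reduce the payoff to a sum over my matches. By linearity of expectation, my expected total is the sum over my $n$ matches of the probability that I win each one, plus half the probability of a draw. Fix the coordinate $i$ in which I put $y_i$. Because the schedule is uniformly random, my opponent in that match is some other player $j$, and the amount $j$ spends against me is one coordinate of $j$'s Blotto vector, chosen uniformly among its $n$ coordinates. Each marginal $X_k$ is uniform on $[0,2/n]$ by condition~2, so whichever coordinate is used, the opposing amount is uniform on $[0,2/n]$. Ties have probability zero because this law is continuous. Hence the point I expect from match $i$ is
\[
g(y_i)=\Pr\bigl(U<y_i\bigr)=\min\Bigl(\tfrac{n y_i}{2},\,1\Bigr),\qquad U\sim \mathrm{U}\Bigl[0,\tfrac{2}{n}\Bigr].
\]
Only the marginals enter here. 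Dependence across my different matches is irrelevant, since I take expectations of a sum.

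The second step bounds the total. Because $g(y)\le \tfrac{n}{2}y$, we get $\sum_i g(y_i)\le \tfrac n2\sum_i y_i=\tfrac n2$. Equality holds exactly when every $y_i\le 2/n$. Every realization of a Blotto strategy satisfies this, since each marginal is supported on $[0,2/n]$. So any pure strategy in the support attains $n/2$, a Blotto mixture attains $n/2$ in expectation, and no deviation can do better. This is the equalizer/upper-bound argument familiar from Borel--Ville, and it shows that the Blotto strategy is a best response to itself, so it is a symmetric equilibrium.

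I expect the main obstacle to be the first step, namely the claim that the opposing amount in a given match is exactly uniform on $[0,2/n]$. This requires that the assignment of a player's coordinates to opponents be uniformly random and independent of the realized allocation vector. The model's phrase that players commit ``without knowing the identity of their opponents'' should be read in exactly that way. Conditioning on the opponent $j$, the coordinate $k$ of $j$'s vector used against me is uniform on $\{1,\ldots,n\}$ and independent of $X$. The opposing amount is then a mixture of the marginals $X_k$, each uniform on $[0,2/n]$, hence itself uniform on $[0,2/n]$. The remaining bookkeeping, such as verifying measure-zero ties and excluding atoms, is routine once this is in place.
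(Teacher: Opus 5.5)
Your proof is correct and follows the paper's argument. In each match the opposing allocation is $\mathrm{U}[0,2/n]$, so your payoff is $\sum_i \min(n y_i/2,1)\le n/2$, with equality exactly when every $y_i\le 2/n$, which holds on the support of any Blotto strategy. The one slip is your opening claim that the payoff does not depend on your allocation: it is constant only over allocations with all coordinates at most $2/n$ and strictly smaller otherwise, but your second step already handles this correctly.
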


\begin{proof}
Assume that the other $n$ players use a common mixed Blotto strategy and that I play the pure strategy \( (x_1, x_2, \ldots, x_n) \), where \( x_1 \geq x_2 \geq \dots \geq x_n \) and \( x_1 + x_2 + \ldots + x_n = 1 \). We will show that this strategy is a best response if and only if \( x_1 \leq \frac{2}{n} \).  

By the definition of a Blotto strategy, in each of my $n$ matches, my opponent randomizes their allocation according to the uniform distribution over the interval \( [0, \frac{2}{n}] \). Therefore, my expected payoff is 

\[
P_0 = \sum_{i=1}^{n} \Pr(Y_i < x_i \mid Y_i \sim \text{U}[0, \tfrac{2}{n}]).
\]

For any \( 0 \leq x \leq 1 \) and \( Y \sim \text{U}[0, \tfrac{2}{n}] \), we have:

\[
\Pr(Y < x) = 
\begin{cases} 
\frac{n}{2}x & \text{if } 0 \leq x \leq \tfrac{2}{n}, \\
1 & \text{if } \tfrac{2}{n} < x \leq 1.
\end{cases}
\]

Therefore, if \( x_1 \leq \frac{2}{n} \), my expected payoff is

\[
P_0 = \frac{n}{2} x_1 + \frac{n}{2} x_2 + \ldots + \frac{n}{2} x_n = \frac{n}{2},
\]

and if \( x_k > \frac{2}{n} \geq x_{k+1}\), then my expected payoff is

\[
P_0 = k + \frac{n}{2} (x_{k+1} + \ldots +  x_n) < \frac{n}{2} (x_1 + \ldots + x_n) = \frac{n}{2}.
\]

Therefore, we conclude that a pure strategy \( (x_1, x_2, \ldots, x_n) \) is a best response if and only if \( x_1 \leq \frac{2}{n} \). As a result, any mixed strategy that randomizes between pure strategies, all of which have \( x_1 \leq \frac{2}{n} \), is also a best response.

The marginal distributions of the variables in the Blotto strategy are all $\mathrm{U}\left[0, \frac{2}{n}\right]$, ensuring that no individual value exceeds \( \frac{2}{n} \). Therefore, any Blotto strategy constitutes a best response to  itself, and thus forms an equilibrium.
\end{proof}

\begin{remark}
More generally, any strategy profile in which each player selects a Blotto strategy (not necessarily identical for all players) constitutes a mixed equilibrium.
\end{remark}

\section{Winner-takes-all payoff: three players}
In this section, we consider a three-player game in which each player faces each of the other two exactly once and allocates their resources between the two matches. The players aim to maximize their chances of winning the tournament.

Pure strategies are of the form $(x, 1 - x)$. Since players must decide on their allocation strategy without knowing the identity of their opponent in a given match, without loss of generality, we restrict attention to $x \in \left[\frac{1}{2}, 1\right]$ to avoid duplication of strategies.

Unlike the variation that maximizes expected points, in the case of maximizing the probability of winning, not every pure strategy of the form \( (x, 1 - x) \) constitutes a symmetric equilibrium. In this section, we fully characterize the payoff function, representing the player's chance of finishing first, and prove that the strategy \( (\frac{1}{2}, \frac{1}{2}) \) is the unique symmetric equilibrium. 

\begin{lemma}\label{thm:main0}
Assume that the three players play pure strategies $(x, 1-x)$, $(y, 1-y)$, and $(z, 1-z)$. Without loss of generality, suppose that $x \geq y \geq z \geq \frac{1}{2}$. The payoffs of the players, which we denote as $P_x$, $P_y$, and $P_z$, satisfy $P_x+P_y+P_z=1$ and are given in Table~\ref{tab:probabilities}.

\begin{table}[ht]
\centering
\[
\begin{array}{|c|c|c|c|}
\hline
\textbf{case} & \textbf{$P_x$} & \textbf{$P_y$} & \textbf{$P_z$} \\ \hline
x > y > z & 1/6 & 5/12 & 5/12 \\ \hline 
x = y > z & 7/24 & 7/24 & 5/12 \\ \hline 
x > y = z > 1/2 & 1/12 & 11/24 & 11/24 \\ \hline 
x > y = z = 1/2 & 0 & 1/2 & 1/2 \\ \hline 
x = y = z & 1/3 & 1/3 & 1/3 \\ \hline 
\end{array}
\]
\caption{Payoffs in the three-player game.}
\label{tab:probabilities}
\end{table}

\end{lemma}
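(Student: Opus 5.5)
The plan is a direct enumeration. Since the schedule is random and each player does not know which opponent receives which share, each player assigns its larger amount ($x$, $y$, or $z$) to one of its two opponents with probability $\frac12$, independently of the others. So there are $2^3=8$ equally likely \emph{configurations}, and each payoff is the average, over these eight, of the share of the unit prize the player receives. The identity $P_x+P_y+P_z=1$ is immediate: in every configuration the full unit reward is split among the players with the highest total. The work is computing the table entries.

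\textbf{Step 1: classify the three match types.} In the match between two players, each brings either its large amount ($\ge \frac12$) or its small amount ($\le \frac12$).
\begin{itemize}
\item \emph{Large vs.\ small.} The large side wins, except when both equal $\frac12$; that only happens for a player whose parameter is $\frac12$, and then it is a draw.
\item \emph{Large vs.\ large.} The larger parameter wins, with a draw if the parameters are equal.
\item \emph{Small vs.\ small.} Since $1-x\le 1-y\le 1-z$, the \emph{smaller} parameter wins, with a draw if equal.
\end{itemize}
With this rule every match in every configuration is decided by comparing $x,y,z$.

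\textbf{Step 2: tabulate points and winners.} For each configuration I list the three match results and the point totals, which always sum to $3$. Absent draws, a round robin of three players is either a cycle (each player has $1$ point, three-way tie, share $\frac13$ each) or transitive (one player has $2$ points and wins outright). With draws, I compute the totals directly and split the prize among those tied at the top.

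For the generic case $x>y>z>\frac12$ I expect the following count, which I will verify:
\begin{itemize}
\item $X$ beats $Y$ and $Z$ only in configurations where $X$'s large amount meets the opponent's small one, or in the large--large match. $X$ loses every small--small match.
\item This should give $X$ outright wins in $0$ configurations plus a share of the cyclic ties, totalling $P_x=\frac16$.
\item By the cyclic tie and symmetry between $Y$ and $Z$, I expect $P_y=P_z=\frac{5}{12}$.
\end{itemize}
Concretely: two configurations are cycles, worth $\frac13\cdot\frac28$ to each player. Among the six transitive configurations, $Y$ and $Z$ each win three and $X$ none. This gives $P_x=\frac{2}{24}\cdot 2=\frac16$ and $P_y=P_z=\frac{1}{12}+\frac{3}{8}=\frac{5}{12}$, consistent with the table.

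\textbf{Step 3: the degenerate rows.} I rerun the same eight configurations with the draws induced by equalities.
\begin{itemize}
\item $x=y>z$: the large--large and small--small matches between $X$ and $Y$ become draws.
\item $y=z>\frac12$: the same happens between $Y$ and $Z$.
\item $y=z=\frac12$: in addition, every match involving $Y$ or $Z$ against each other is a draw, and $X$'s small amount always loses.
\item $x=y=z$: symmetry alone gives $\frac13$ each.
\end{itemize}
Half-point totals now produce ties such as $1.5$--$1.5$--$0$ or $1.5$--$1$--$0.5$, and I split the prize accordingly.

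\textbf{Main obstacle.} The main obstacle is bookkeeping rather than ideas: handling draws correctly in the degenerate cases, particularly $x=y>z$, whose entries $\frac{7}{24}$ involve thirds and halves. I will organize each case as an $8$-row table indexed by the three orientation bits, and check every row against the sum identity $P_x+P_y+P_z=1$.
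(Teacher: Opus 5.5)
Your method, averaging each player's prize share over the $2^3$ equally likely orientation configurations using your Step~1 comparison rules, is the same as the paper's, and those rules are correct. The problem is the one row you actually compute, $x>y>z$. It is not true that only two configurations are cycles, with $Y$ and $Z$ each winning three of the rest. That count gives $(\frac{1}{12},\frac{11}{24},\frac{11}{24})$, which is the row for $x>y=z>\frac12$. You reach $\frac16$ and $\frac{5}{12}$ only through two slips. First, two cycles give each player $\frac{2}{8}\cdot\frac13=\frac{1}{12}$, so the extra factor $2$ in ``$\frac{2}{24}\cdot 2$'' is unjustified. Second, $\frac{1}{12}+\frac38=\frac{11}{24}\neq\frac{5}{12}$. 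The agreement with the table that you report is therefore an artifact. Your planned check $P_x+P_y+P_z=1$ would not have caught this, because the wrong count also sums to $1$.

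The correct count follows from your own rules. Let $a$ be the event that $X$ sends $x$ to $Y$, and $b$ the event that $Y$ sends $y$ to $X$.
\begin{itemize}
\item $X$ beats $Y$ iff $a$ holds, and beats $Z$ iff $a$ fails. When $X$ brings $1-x$ it loses both small--small and small--large. So $X$ always has exactly one point.
\item $Y$ beats $Z$ iff $b$ fails, whatever $Z$ does.
\end{itemize}
Hence four configurations are cycles. $Y$ wins outright in the two where $a$ and $b$ both fail, and $Z$ in the two where both hold. This gives $P_x=\frac48\cdot\frac13=\frac16$ and $P_y=P_z=\frac16+\frac14=\frac{5}{12}$. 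The equality $P_y=P_z$ comes from this count, not from a symmetry between $Y$ and $Z$, whose parameters differ.

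Your degenerate rows are only sketched, but done the same way they check out:
\begin{itemize}
\item For $x=y>z$: four cycles, two configurations with totals $1.5,1.5,0$, and two where $Z$ scores $2$.
\item For $x>y=z>\frac12$: your ``two cycles, three wins each'' count is the right one here.
\item For $y=z=\frac12$: $X$ always has one point, and whichever of $Y,Z$ did not receive $x$ gets $1.5$ and wins.
\end{itemize}
Finally, the row $x>y>z$ also covers $z=\frac12$, which your ``generic'' case excludes. It behaves identically, since by your own rule a large--small draw requires both parameters to equal $\frac12$.
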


\begin{proof}

In each of the five cases, there are eight equally likely configurations corresponding to each player randomly allocating their resources between the two opponents. Specifically, Player 1 allocates $x$ units of resources against Player 2 or against Player 3; Player 2 allocates $y$ units of resources against Player 1 or against Player 3; and Player 3 allocates $z$ units of resources against Player 1 or against Player 2.

In each case-configuration combination, there are three matches, with deterministic resource allocations and outcomes, leading to a deterministic payoff for each player. The expected payoff of a player in a given case is defined as the average of their payoffs across all case-configuration combinations corresponding to that case.

For example, consider the case $x > y > z$ and the configuration in which Player 1 allocates $x$ against Player 2, Player 2 allocates $y$ against Player 1, and Player 3 allocates $z$ against Player 1.

We have $x > y > z \geq \frac{1}{2} \geq 1-z > 1-y > 1-x$. Therefore, Player 1 wins against Player 2, Player 3 wins against Player 1, and Player 3 wins against Player 2. Consequently, in this case-configuration, Player 3 wins the tournament, and the payoffs are $P_x=P_y=0$ and $P_z = 1$.

The other case-configuration combinations are analyzed similarly; the complete analysis is provided in the appendix.
\end{proof}

Lemma~\ref{thm:main0} provides a complete characterization of the payoff function for pure strategies. We can use it to find the unique symmetric equilibrium.

\begin{theorem}\label{thm:main}
The strategy \( \left(\tfrac{1}{2}, \tfrac{1}{2}\right) \) is the unique symmetric equilibrium.
\end{theorem}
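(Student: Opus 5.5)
We will argue entirely from the payoff table of Lemma~\ref{thm:main0}, which gives exact payoffs for all pure strategy profiles. The proof has two parts: first, $(\tfrac12,\tfrac12)$ is an equilibrium; second, no other symmetric strategy, pure or mixed, is one.

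\textbf{Existence.} Suppose both opponents play $(\tfrac12,\tfrac12)$ and I deviate to $(y,1-y)$ with $y>\tfrac12$. My strategy is then the largest, so the profile falls in the case $x>y=z=\tfrac12$, and my payoff is $P_x=0$. Staying at $\tfrac12$ gives the case $x=y=z$ with payoff $\tfrac13$. Since every pure deviation yields $0<\tfrac13$, mixed deviations are also unprofitable. Hence $(\tfrac12,\tfrac12)$ is a symmetric equilibrium.

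\textbf{Uniqueness among pure strategies.} Suppose both opponents play $(x,1-x)$ with $x>\tfrac12$. If I also play $x$, I get $\tfrac13$. If I deviate to $y=\tfrac12<x$, the profile is of type $x=y>z$ with me as the smallest, so I get $\tfrac5{12}>\tfrac13$. (A deviation to any $y\in(\tfrac12,x)$ gives the same $\tfrac5{12}$.) Thus no pure strategy other than $(\tfrac12,\tfrac12)$ is a symmetric equilibrium.

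\textbf{Uniqueness against mixed strategies.} This is the main obstacle. Let $F$ be a symmetric mixed strategy on $[\tfrac12,1]$ that is not a point mass at $\tfrac12$. For each candidate response $y$, the table gives a payoff $\pi(y)$, computed by averaging over the two independent opponent draws $X_1,X_2\sim F$ according to where $y$ ranks among them. The equilibrium condition requires $\pi(y)\le\int\pi\,dF$ for all $y$.

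I would proceed as follows.
\begin{enumerate}
\item \textbf{Being largest is costly.} Being the strict maximum gives at most $\tfrac16$ (or $0$), while being strictly below some opponent gives at least $\tfrac5{12}$ when distinct. So responses at the bottom of the support are attractive.
\item \textbf{Compare the lowest point with the support.} Let $m=\inf\operatorname{supp}F$. Consider the response $y=m$, or $y=\tfrac12$ if $m>\tfrac12$.
\begin{itemize}
\item If $m>\tfrac12$, then $y=\tfrac12$ is almost surely strictly below both opponents, so it lands in the case $x>y=z$ or $x=y>z$ as the smallest player and earns $\tfrac5{12}$. Meanwhile, a player drawing from $F$ earns on average exactly $\tfrac13$, by symmetry and $P_x+P_y+P_z=1$. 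Since $\tfrac5{12}>\tfrac13$, $F$ is not a best response.
\item If $m=\tfrac12$ and $F$ has an atom $p<1$ at $\tfrac12$, or no atom there, compute $\pi(\tfrac12)$ directly from the table. Against distinct opponents above $\tfrac12$ it gives $\tfrac5{12}$; against one opponent at $\tfrac12$ and one above, it gives $\tfrac12$ (the case $x>y=z=\tfrac12$); against both at $\tfrac12$, it gives $\tfrac13$. Every one of these is at least $\tfrac13$, with strict inequality on an event of positive probability whenever $p<1$. Hence $\pi(\tfrac12)>\tfrac13=\int\pi\,dF$.
\end{itemize}
\item \textbf{Tie cases.} The delicate part is handling atoms in $F$ above $\tfrac12$, where profiles of type $x=y$ or $y=z$ occur with positive probability. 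I expect the lower bound $\pi(\tfrac12)\ge\tfrac13$, strict with positive probability, to survive these cases. I will check this term by term against the relevant rows of the table.
\end{enumerate}

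Together these steps show that any symmetric equilibrium must be the point mass at $\tfrac12$, which is the claimed uniqueness.
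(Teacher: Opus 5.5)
Your proposal is correct and is essentially the paper's argument: against any symmetric $F\neq\delta_{1/2}$, the deviation $(\tfrac12,\tfrac12)$ earns, realization by realization, $\tfrac5{12}$, $\tfrac12$ or $\tfrac13$ (rows $x>y>z$ or $x=y>z$, $x>y=z=\tfrac12$, and $x=y=z$ of Lemma~\ref{thm:main0}), hence at least $\tfrac13$ and strictly more unless both opponents draw $\tfrac12$, while the symmetric profile gives exactly $\tfrac13$. Your case split on $m=\inf\operatorname{supp}F$ and the deferred ``tie cases'' check are unnecessary: the only configuration you had not listed is two opponents at a common atom $a>\tfrac12$, which is the row $x=y>z$ with $P_z=\tfrac5{12}$ (also, in your first bullet ``$x>y=z$'' should read ``$x>y>z$'').
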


\begin{proof}

First, we prove that the strategy \( \left(\tfrac{1}{2}, \tfrac{1}{2}\right) \) is an equilibrium. Assume that the other two players play \( \left(\tfrac{1}{2}, \tfrac{1}{2}\right) \), and that I play $(x, 1-x)$, where $x \geq \frac{1}{2}$. According to Lemma~\ref{thm:main0}, if $x=\frac{1}{2}$ my payoff is $\frac{1}{3}$  and if $x>\frac{1}{2}$ my payoff is $0$. Therefore, \( \left(\tfrac{1}{2}, \tfrac{1}{2}\right) \) is an equilibrium. We will prove that it is unique.

Consider another strategy $(X, 1-X)$ of the other two players, where $X$ is a random variable over pure strategies and may be constant. We will prove that $(X, 1-X)$ is not an equilibrium.

If I play $(X, 1-X)$, my payoff is $\frac{1}{3}$. If I play \( \left(\tfrac{1}{2}, \tfrac{1}{2}\right) \), my payoff is an expectation of my payoffs over all realizations $(x_1, 1-x_1)$ and $(x_2, 1-x_2)$ of the other two players, with $x_1,x_2 \geq \tfrac{1}{2}$.

From Lemma~\ref{thm:main0},  for any values of $x_1$ and $x_2$, my profit from playing \( \left(\tfrac{1}{2}, \tfrac{1}{2}\right) \), denoted $P_z$ in the lemma, is greater than or equal to $\frac{1}{3}$ and is strictly greater than $\frac{1}{3}$ unless $x_1 = x_2 = \frac{1}{2}$. 

Since their strategy is not \( \left(\tfrac{1}{2}, \tfrac{1}{2}\right) \), with positive probability we have $x_1 \neq \frac{1}{2}$. Therefore, with expectation over all realizations, my profit from the strategy \( \left(\tfrac{1}{2}, \tfrac{1}{2}\right) \) is strictly greater than my profit from the strategy $(X, 1-X)$. Hence, $(X, 1-X)$ is not a best response against $(X, 1-X)$, and thus not an equilibrium.

\end{proof}

Although mixed strategies such as $(X, 1-X)$, where $X \sim U[0,1]$, may appear as natural candidates for a symmetric equilibrium, we have shown that none of them constitutes one. In addition, we provide a characterization of the best responses to mixed strategies of this form.

\begin{remark}\label{thm:main2}
Assume that the other two players allocate their resources independently as $(X, 1-X)$, where $X \sim f_X$, a continuous density function over $[0,1]$. Assume that I choose my allocation deterministically as $(y, 1-y)$, where $y \in [\frac{1}{2},1]$. My payoff, denoted as $P_0$, is given by 
\[
P_0 = \frac{1}{6} + a_y (1 - a_y) \quad \text{where} \quad a_y = \frac{1-\Pr(1-y < X < y)}{2}.
\]
My strategy $(y, 1-y)$ is a best response if and only if $\Pr(1-y < X < y) = 0$. My strategy $(\frac{1}{2}, \frac{1}{2})$ is always a best response.
\end{remark}

The proof for Remark~\ref{thm:main2} is provided in the appendix. 

\begin{example}
Assume that the other two players allocate their resources independently as $(X, 1-X)$, where $X \sim U[0,1]$. Assume that I choose my allocation deterministically as $(y, 1-y)$, where $y \in [\frac{1}{2},1]$. My payoff, denoted as $P_0$, is given by 
\[
P_0 = \frac{1}{6} + y(1-y).
\]
Since $a_y=\frac{1}{2}(1-(y-(1-y)))=1-y$.
My unique best response is $(\frac{1}{2}, \frac{1}{2})$.
\end{example}

\section{Winner-takes-all payoff: four or more players}
In this section, we consider a game with four or more players. The players aim to maximize their chances of winning the tournament.

We begin the section with an analysis of the four-player game in which each player faces the other three exactly once and allocates their resources across the three matches. 

Pure strategies are of the form $(x_1, x_2, x_3)$, where $x_1 + x_2 + x_3 = 1$. Since players must decide on their allocation strategy without knowing the identity of their opponent in a given match, without loss of generality, we restrict attention to strategies satisfying $x_1 \geq x_2 \geq x_3$ to avoid duplication.

An analytical solution for this variation may be challenging to derive, as the payoff of one player also depends on the interactions among the other players, and the number of such interactions increases with the number of players. Moreover, unlike the case with three players, the variation with four players has no pure equilibrium.

We solve this variation numerically. We present a geometric representation of this game in Section~\ref{sec:geometry} and its numerical discretization in Section~\ref{sec:discretization}. Then, we prove that there is no pure equilibrium in Section~\ref{sec:pure} and discuss the analysis of mixed strategies in Section~\ref{sec:mixed}. Finally, we use methods from algorithmic game theory to find an $\varepsilon$-equilibrium in Section~\ref{sec:algorithmic} and extend our solution to the five-player game in Section~\ref{sec:five}.

\subsection{Geometric representation}
\label{sec:geometry}
The possible strategies correspond to points satisfying the constraints $x_1 \geq 0$, $x_2 \geq 0$, $x_3 \geq 0$, and $x_1 + x_2 + x_3 = 1$, which define a two-dimensional triangle bounded by the lines $x_1 = 0$, $x_2 = 0$, and $1 - x_1 - x_2 = 0$. However, this triangle contains strategies that are strategically equivalent. For example, the vertices of this triangle are $(1,0)$, $(0,1)$, and $(0,0)$, corresponding to the equivalent strategies $(1,0,0)$, $(0,1,0)$, and $(0,0,1)$, respectively.  

To avoid redundant representations, we restrict attention to strategies satisfying $x_1 \geq x_2 \geq x_3$. Under this restriction, the strategy space is the triangle defined by the lines $x_1 = x_2$, $x_2 = 1-x_1-x_2$, and $1-x_1-x_2 = 0$. The vertices of this triangle are $(1,0)$, $(\tfrac{1}{2},\tfrac{1}{2})$, and $(\tfrac{1}{3},\tfrac{1}{3})$, corresponding to the strategies $(1,0,0)$, $(\tfrac{1}{2},\tfrac{1}{2},0)$, and $(\tfrac{1}{3},\tfrac{1}{3},\tfrac{1}{3})$, respectively.

In the following sections, we denote a general pure strategy by $(x,y,z)$ instead of $(x_1,x_2,x_3)$. The graphical representation of a pure strategy within the triangle is shown in Figure~\ref{fig:plot_triangle}.

\begin{figure}[ht]
    \centering
    \includegraphics[width=1\textwidth]{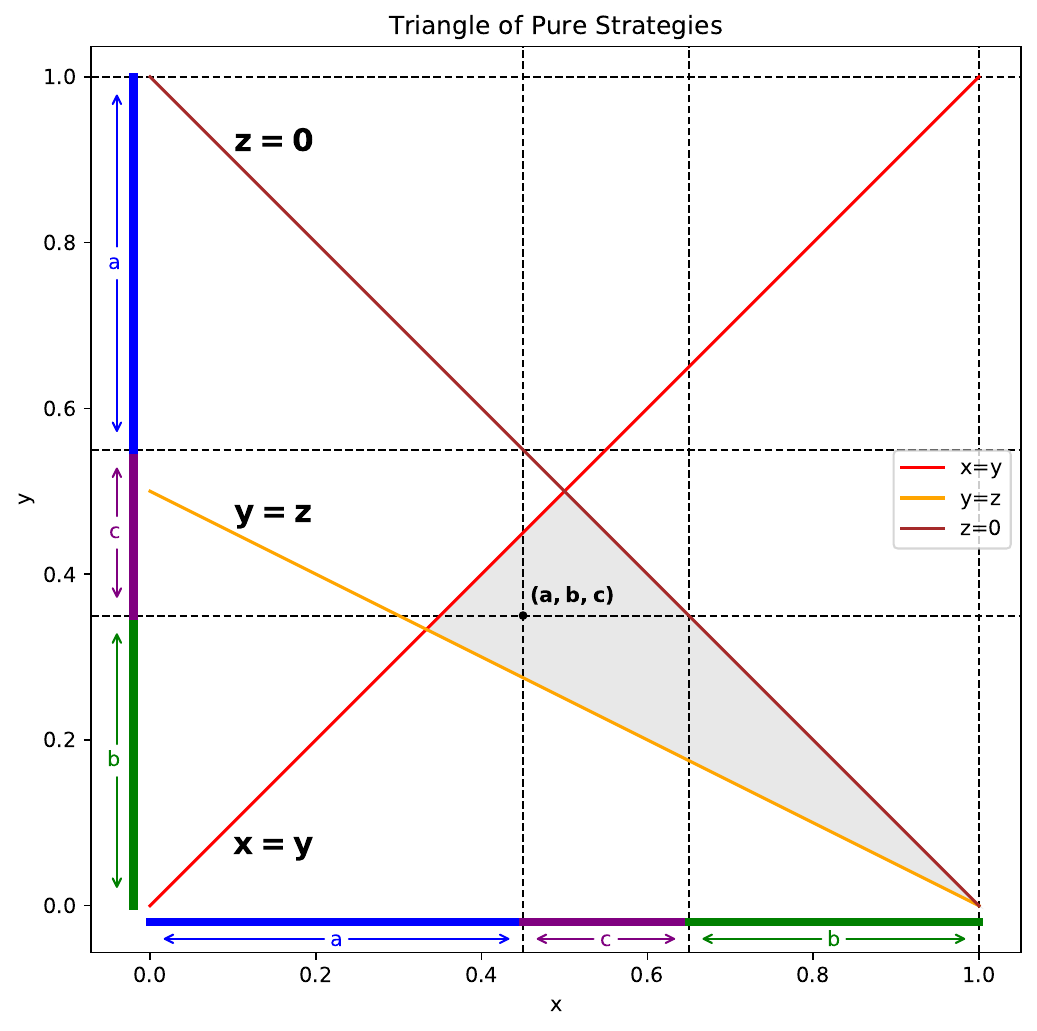}
    \caption{The graphical representation of a pure strategy $(a, b, c)$.}
    \label{fig:plot_triangle}
\end{figure}

\subsection{Numerical discretization}
\label{sec:discretization}

The infinite set of pure strategies in this game is defined as $S = \{(x, y, z) \in \mathbb{R}^3 : x \geq y \geq z \geq 0,\ x + y + z = 1\}$. To analyze the game numerically, we discretize this strategy space.

We define a two-dimensional grid with step size $\delta$. The finite set of pure strategies in the grid is 
$$S_\delta = \{(n_1\delta, n_2\delta)\in\mathbb{R}^2: n_1\delta \geq n_2\delta \geq 1-n_1\delta-n_2\delta \geq 0, n_1, n_2 \in \mathbb{Z}\}.$$

The number of pure strategies in $S_\delta$ depends on $\delta$ and increases on the order of $\tfrac{1}{\delta^2}$. In the following sections, we analyze the grid with $\delta = 0.01$, which contains 884 pure strategies, and is shown in Figure~\ref{fig:plot_grid}. 

\begin{figure}[ht]
    \centering
    \includegraphics[width=1\textwidth]{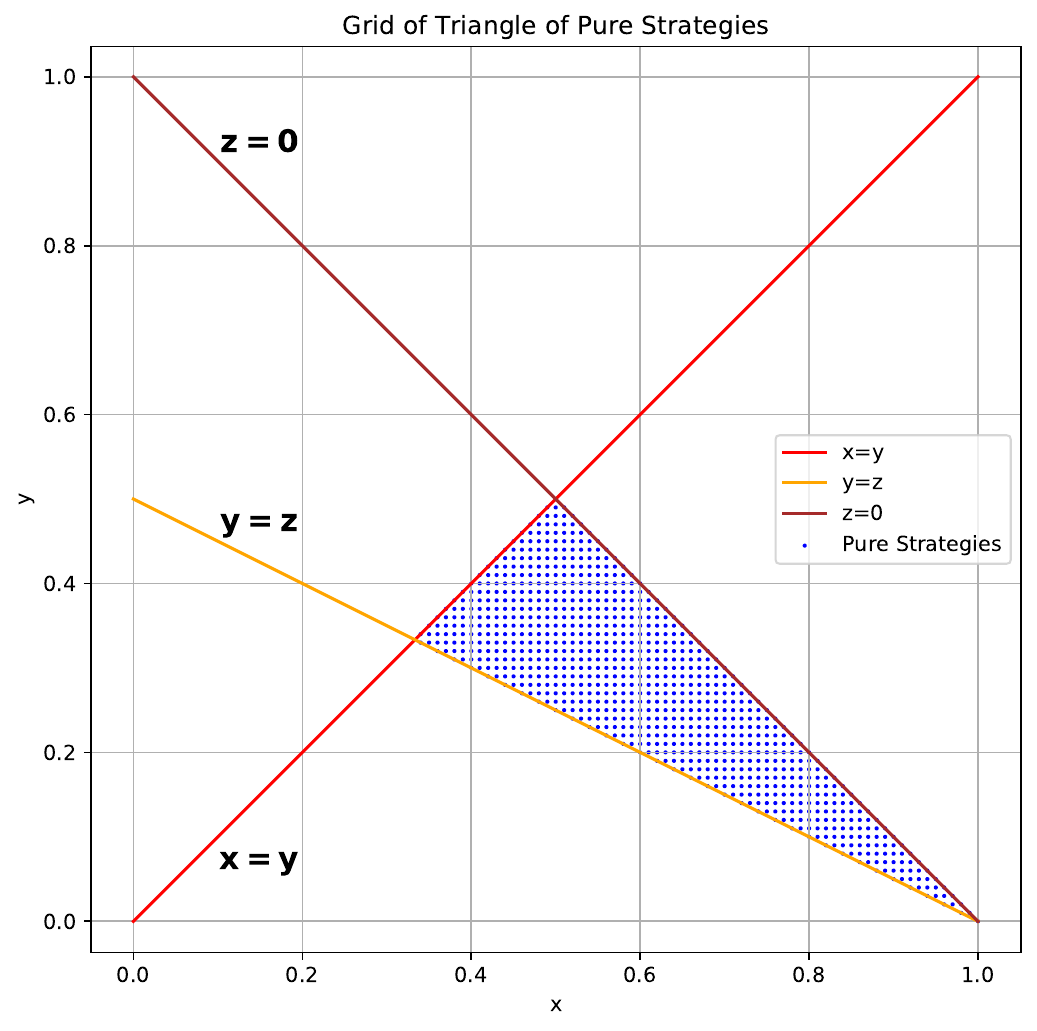}
    \caption{The grid of pure strategies with $\delta = 0.01$.}
    \label{fig:plot_grid}
\end{figure}

\subsection{Pure strategies}
\label{sec:pure}

As in the variation that maximizes expected total points, in the case of maximizing the winning probability, no pure strategy constitutes a symmetric equilibrium. In particular, we claim that the specific example in Lemma~\ref{thm:lemma1} still applies: let $(x,y,z)$ denote the pure strategy adopted by each of the other three players. Then, the strategy $(x - 2\epsilon, y + \epsilon, z + \epsilon)$ yields a strictly higher expected payoff for me than the strategy $(x,y,z)$, which yields a payoff of $0.25$.

This claim can be proved analytically. However, since a player's payoff in this variation depends on the points earned by the other players, we would need to consider up to $6^4=1296$ possible cases ($4$ players, each with $3!=6$ equally likely permutations of resource allocation). Although symmetry can reduce the number of distinct cases, it may be easier - and equally exact - to perform a complete enumeration using a simple computer program.

The results for all common strategies $(x, y, z)$ are shown in Table~~\ref{tab:strategies}. In each case, my payoff under a slightly modified strategy exceeds that of the common strategy, demonstrating that no such strategy constitutes a symmetric equilibrium.

\begin{table}[ht]
\centering
\[
\begin{array}{|c|c|c|}
\hline
\textbf{Their Strategy} & \textbf{My Strategy} & \textbf{My Payoff} \\ \hline
(x, y, z), x>y>z & (x-2\epsilon, y+\epsilon, z+\epsilon) & 570/1296 = 0.440 \\ \hline
(x, x, y), x>y & (x+\epsilon, x-2\epsilon, y+\epsilon) & 576/1296 = 0.444 \\ \hline
(x, y, y), x>y & (x-2\epsilon, y+\epsilon, y+\epsilon) & 888/1296 = 0.685 \\ \hline
(1/3, 1/3, 1/3) & (1/2, 1/2, 0) & 648/1296 = 0.500 \\ \hline
\end{array}
\]
\caption{Strategies in the four-player game.}
\label{tab:strategies}
\end{table}

Note that this result is independent of the numerical discretization and therefore remains valid in the original continuous setting.

\subsection{Mixed strategies}
\label{sec:mixed}
We have shown that there is no pure symmetric equilibrium. Moreover, for any pure strategy $(x,y,z)$ played by the other three players, I have a response that yields me a payoff of at least $0.440$. 

In this section, we analyze mixed symmetric strategies. Given a mixed strategy for the other three players, we compute my best response in the grid with $\delta = 0.01$. As a reminder, there is always a pure best response, and the grid contains a finite number of pure strategies. In addition, a strategy constitutes an equilibrium if and only if the best response against it yields a payoff of exactly $0.25$.

A mixed strategy on the grid assigns probabilities $0 \leq p_s \leq 1$ to each pure strategy $s \in S_\delta$, such that \(\sum_{s \in S_\delta} p_s = 1\). As noted in Section~\ref{sec:discretization}, for $\delta = 0.01$ we have $|S_\delta| = 884$. Therefore, computing the best response requires evaluating $884$ candidate pure strategies for me, each facing up to $884^3$ possible combinations of pure strategies for the other three players.

Due to computational constraints, an exact calculation is infeasible. Instead, we employ a Monte Carlo approach in which, for each candidate response, we simulate 10,000 realizations of the strategy combinations of the other players, sampled according to their mixed strategy, to approximate its expected payoff. This yields estimates that are sufficiently stable for our purposes.

Using this estimation procedure, we search for a mixed symmetric equilibrium. 

We test some natural candidates, motivated by the two-coordinate uniform strategy $(X,1-X)$ with $X \sim U[0,1]$. However, in analogy with the Bertrand paradox (Bertrand, 1889), where the concept of a "random chord" in a circle is not uniquely defined, the extension of the notion $X \sim U[0,1]$ to three coordinates lacks a unique interpretation.

\begin{example}
Consider the strategy that samples uniformly from the triangle with vertices $(1,0,0)$, $(0,1,0)$, and $(0,0,1)$. A similar strategy samples uniformly from the triangle with vertices $(1,0,0)$, $(\tfrac{1}{2},\tfrac{1}{2},0)$, and $(\tfrac{1}{3},\tfrac{1}{3},\tfrac{1}{3})$; in our context, the two differ only on a set of measure zero. Its best-response payoff on the grid is $0.362$.
\end{example}

\begin{example}
Consider a Blotto strategy that assigns equal probability to the two line segments from $(0,\tfrac{2}{3},\tfrac{1}{3})$ to $(\tfrac{2}{3},\tfrac{1}{3},0)$ and from $(0,\tfrac{1}{3},\tfrac{2}{3})$ to $(\tfrac{2}{3},0,\tfrac{1}{3})$. Its best-response payoff on the grid is $0.302$.
\end{example}

\begin{example}
Consider the strategy that samples $x,y,z \sim U[0,1]$ independently and plays $\bigl(\tfrac{x}{x+y+z}, \tfrac{y}{x+y+z}, \tfrac{z}{x+y+z}\bigr)$. Its best-response payoff on the grid is $0.299$.
\end{example}

Although these strategies are closer to equilibrium than any pure strategy, they are not sufficiently close. Consequently, a dedicated optimization algorithm is required to identify an equilibrium strategy.

\subsection{Algorithmic solution}
\label{sec:algorithmic}

We propose a variant of \emph{best-response dynamics} (e.g., Roughgarden, 2016), which we call \emph{approximate best-response dynamics with stochastic inertia}. This method combines elements from three established approaches: \emph{best-response dynamics with inertia} (e.g., Swenson et al., 2018), \emph{approximate best-response dynamics} (e.g., Bistritz and Leshem, 2017), and \emph{stochastic approximation} (e.g., Lai, 2003).

Best-response dynamics is an iterative procedure in which, at each step, a player updates their strategy to the best response against the strategies played in the previous round. Formally, if $x^{(t)}$ denotes the strategy profile at iteration $t$, then the update rule is
\[
x^{(t+1)} = \mathrm{BR}\bigl(x^{(t)}\bigr).
\]

In best-response dynamics with inertia, updates are smoothed through a convex combination of the previous strategy and its best response:
\[
x^{(t+1)} = (1-\alpha)\,x^{(t)} + \alpha \,\mathrm{BR}\bigl(x^{(t)}\bigr), \qquad \alpha \in (0,1).
\]

In approximate best-response dynamics, updates are smoothed through a uniform combination of all pure strategies within $\gamma$ of the best-response payoff:
\[
x^{(t+1)} = \widehat{\mathrm{BR}}(x^{(t)}), \qquad \gamma \ge 0.
\]

Finally, stochastic approximation algorithms form a general class of iterative methods, commonly used in optimization. Their convergence typically requires a decaying step-size sequence $(\alpha_t)$, satisfying:

\[
\sum_{t=0}^{\infty} \alpha_t = \infty, 
\qquad 
\sum_{t=0}^{\infty} \alpha_t^2 < \infty.
\]

We adopt this idea by replacing the fixed step size $\alpha$ with a decaying sequence $(\alpha_t)$, thereby incorporating stochastic inertia into the dynamics.

Our proposed dynamics synthesizes the strengths of these techniques in order to improve performance and encourage convergence in the game.

\begin{definition}
Let $G$ be the set of pure strategies in a symmetric $n$-player game, and let $\Delta(G)$ denote the set of mixed strategies over $G$. For $g_1,g_2 \in \Delta(G)$, define $u(g_1,g_2)$ as the payoff to a player who plays $g_1$ when each of the other $n-1$ players plays $g_2$.

\emph{Approximate Best-Response Dynamics with Stochastic Inertia} is an iterative procedure in which the mixed strategy $x^{(t)} \in \Delta(G)$ is updated at each iteration according to the rule:
\[
x^{(t+1)} = (1 - \alpha_t) \cdot x^{(t)} + \alpha_t \cdot \widehat{\mathrm{BR}}(x^{(t)}),
\]
where:
\begin{itemize}
    \item $\alpha_t \in (0,1)$ is a decaying step-size parameter,
    \item $\widehat{\mathrm{BR}}(x^{(t)})$ is an approximate best-response distribution that assigns equal probability to all pure strategies $g \in G$ satisfying
    \[
    u(g,\ x^{(t)}, \dots,\ x^{(t)}) \ge \max_{g' \in G} u(g',\ x^{(t)}, \dots,\ x^{(t)}) - \gamma,
    \]
    for some tolerance parameter $\gamma \ge 0$.
\end{itemize}
\end{definition}

A strategy $x$ is an \emph{$\varepsilon$-equilibrium} if the maximum gain that a player can obtain by unilaterally deviating from it is bounded by $\varepsilon$. Formally:
\[
\max_{g \in G} u(g,\ x) - u(x, x) \leq \varepsilon.
\]  

Returning to our game, we look for an $0.01$-equilibrium, which corresponds to a best-response payoff of less than $0.26$, since the equilibrium payoff is $0.25$. We initialize the process with a decaying step-size $\alpha_t = \frac{1}{t}$, a tolerance $\gamma = 0.01$, and an initial strategy $x^{(0)}$ that assigns equal probabilities to all pure strategies $s \in S_\delta$.

We ran the \emph{Approximate Best-Response Dynamics with Stochastic Inertia} algorithm for 1000 iterations. A strategy achieving a best-response payoff below $0.26$ is first reached after approximately 200 iterations. The best strategy obtained during the process achieves a best-response payoff of around $0.257$. The convergence of the best-response value over iterations is shown in Figure~\ref{fig:plot_iterations}.

\begin{figure}[ht]
    \centering
    \includegraphics[width=1\textwidth]{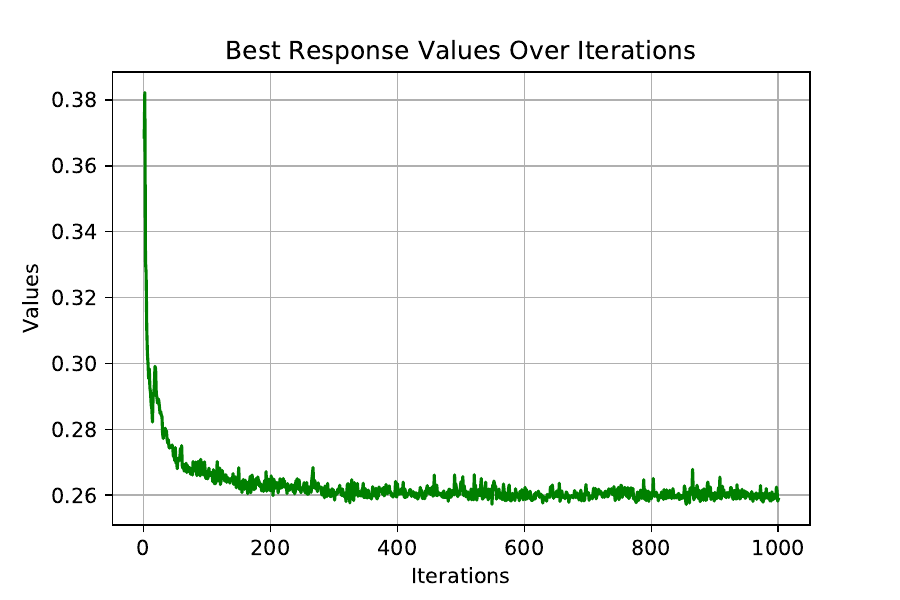}
    \caption{Convergence behavior of the algorithm.}
    \label{fig:plot_iterations}
\end{figure}

Our Monte Carlo approach with 10,000 simulations is sufficiently stable for the update step of the algorithm. However, in order to better evaluate the best returned strategy, we increased the number of simulations to 1,000,000, and find that its best-response payoff is around $0.254$. 

The returned strategy is characterized by the probabilities assigned to each of the 884 pure strategies in $S_\delta$. These probabilities are shown graphically in Figure~\ref{fig:optimal_strategy}, normalized so that the highest probability assigned is scaled to 1. A discretized version, in which the probability of each strategy is rounded to three decimal places, thus creating six levels of positive probabilities, is provided in the appendix. 

\begin{figure}[ht]
    \centering
    \includegraphics[width=1\textwidth]{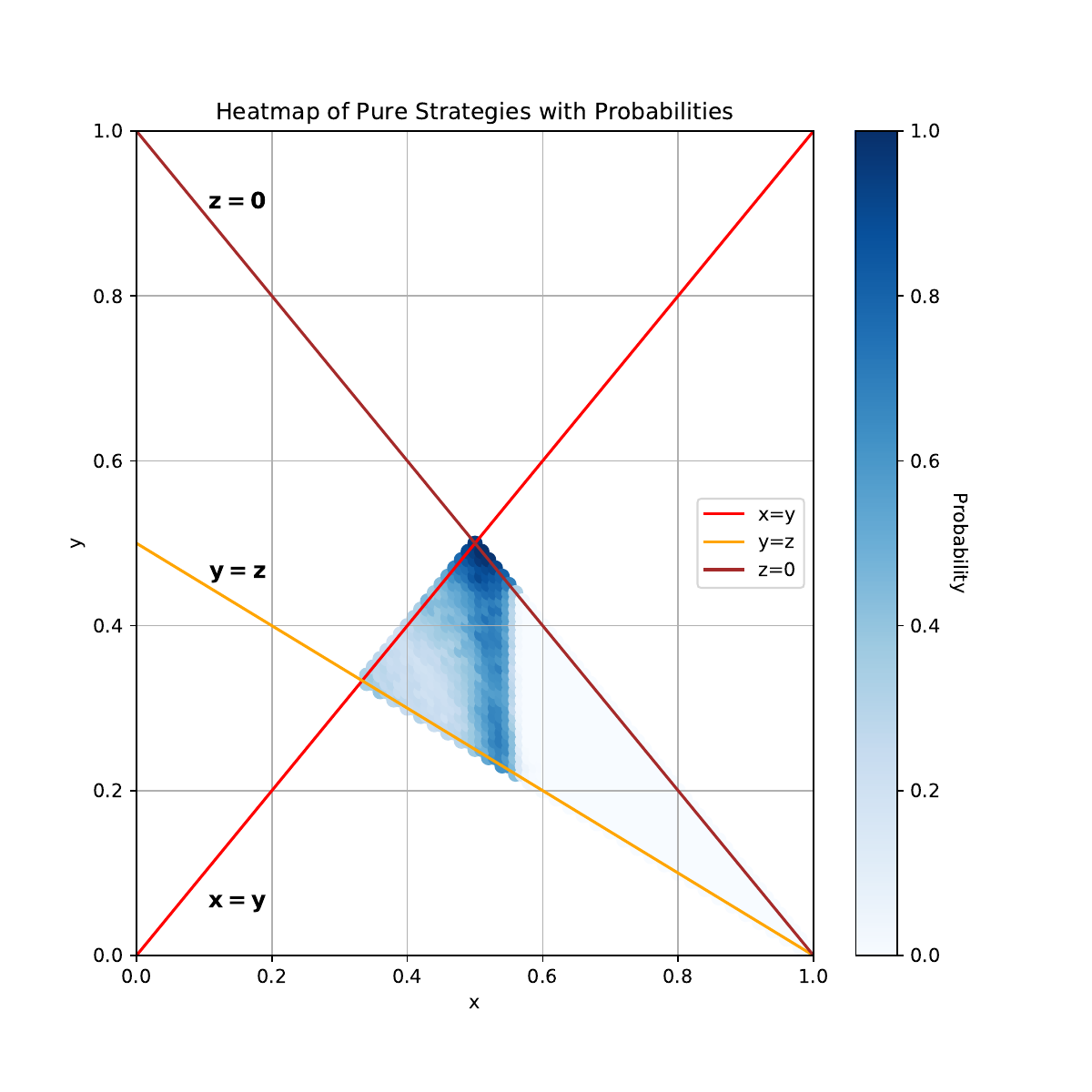}
    \caption{Normalized probability weights assigned by the returned strategy.}
    \label{fig:optimal_strategy}
\end{figure}

An additional interesting graphical characteristic of a strategy is given by its marginal distribution: When Player A faces Player B (who plays the strategy), what is the probability density over the resource that Player B allocates to the match against Player A? The estimated marginal density for the returned strategy is shown in Figure~\ref{fig:optimal_strategy_dist}.

\begin{figure}[ht]
    \centering
    \includegraphics[width=1\textwidth]{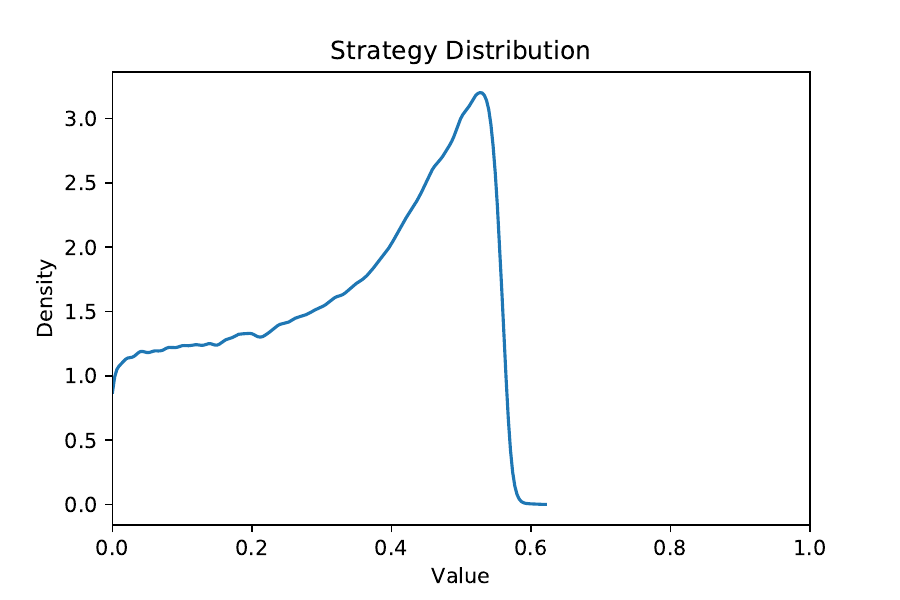}
    \caption{Marginal distribution of returned strategy $n=4$.}
    \label{fig:optimal_strategy_dist}
\end{figure}

The marginal density increases up to a very sharp drop to zero. This increase is not linear; it is well described either by a quadratic function $ax^2 + bx + c$ with explained variance $R^2 = 0.98$, or by an exponential function $\exp(ax + b) + c$ with explained variance $R^2 = 0.99$. 

In addition, the strategy with a sharp drop to zero is better than a similar strategy with an instantaneous drop to zero, in terms of its best-response payoff being closer to the equilibrium payoff.

\subsection{Five players}
\label{sec:five}

The algorithmic solution presented in the previous section extends naturally from the four-player case to any number of players, though with two main limitations. 

First, while a geometric representation of strategies is feasible for four players in two dimensions, it requires a three-dimensional representation for five players, and becomes practically infeasible for six players or more. 

Second, the computational complexity increases rapidly with the number of players. For instance, with a discretization step-size of $\delta = 0.01$, the four-player game involved 884 pure strategies, whereas the five-player game requires 8,037 pure strategies. 

Nevertheless, implementing the algorithm remains possible, and the marginal density of the computed strategy can still be presented. We ran the algorithm in the five-player game for approximately 1250 iterations until reaching a strategy with a payoff below $0.21$, noting that the equilibrium payoff is $0.20$, and a strategy with a payoff below $0.215$ was first reached after approximately 250 iterations. 

The marginal density of the resulting strategy increases non-linearly before exhibiting a sharp drop to zero, mirroring the behavior observed in the four-player case. It is shown in Figure~\ref{fig:optimal_strategy_dist_5p}.

\begin{figure}[ht]
    \centering
    \includegraphics[width=1\textwidth]{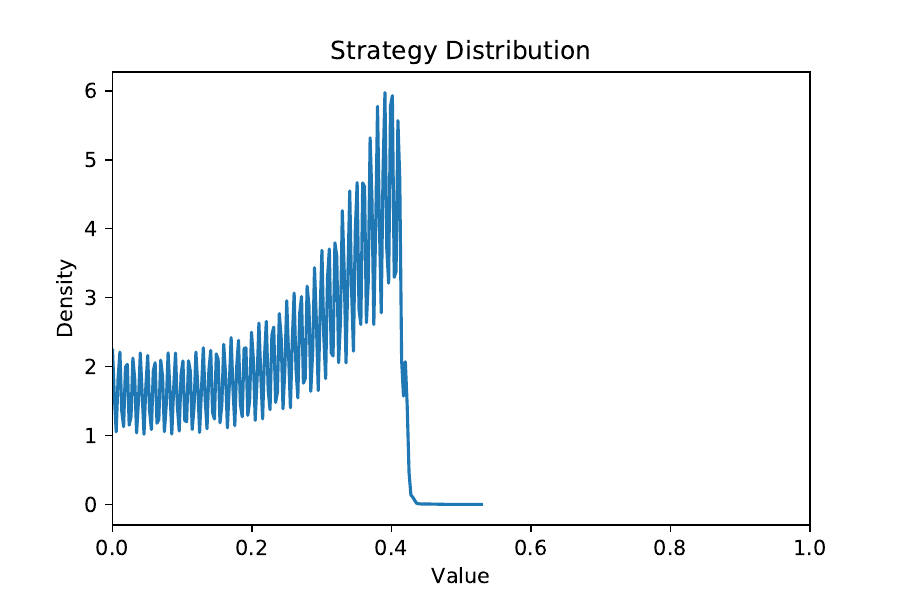}
    \caption{Marginal distribution of returned strategy $n=5$.}
    \label{fig:optimal_strategy_dist_5p}
\end{figure}

\section{Conclusion}

We defined a round-robin resource allocation contest and analyzed four variations. The resulting symmetric equilibrium strategies are summarized in Table~\ref{tab:results}, with pure equilibrium strategies in the variations with three players and mixed equilibrium strategies in the variations with four or more players. In addition, we proposed a best-response dynamics algorithm and used it to solve one of the variations.

\begin{table}[ht]
\centering
\begin{tabular}{|c|c|c|}
\hline
 & $n=3$ players & $n \geq 4$ players \\ \hline
\makecell{Total points \\ payoff} & \makecell{$(x, 1-x)$ \\ $\forall x \in \left[\frac{1}{2}, 1\right]$} & \makecell{Any Blotto strategy with \\ marginal dist. $\mathrm{U}[0, \tfrac{2}{n-1}]$} \\ \hline
\makecell{Winner-takes-all \\ payoff} & \makecell{$(\tfrac{1}{2}, \tfrac{1}{2})$ \\ unique solution} & \makecell{Solved numerically for \\ $n=4$ and $n=5$} \\ \hline
\end{tabular}
\caption{Symmetric equilibrium results.}
\label{tab:results}
\end{table}

The table illustrates a key difference between the two payoff schemes. One can think of the total points payoff as representing a tournament where players receive a prize for every match win, while in the winner-takes-all payoff, players receive a prize only for winning the entire tournament.

The tension between maximizing the total score and maximizing the probability of outscoring the opponents also exists in the scoring-based games themselves.  Skinner (2011) modeled a game as a sequence of independent plays and found that, in certain situations, it may be optimal for teams to select plays with a lower expected score but a higher variance in scoring outcomes. 

Although we were able to find symmetric equilibrium strategies in all variations, some questions remain open.

In the total points payoff with four or more players, we proved that any Blotto strategy is a symmetric equilibrium. The equilibrium is not unique, as there are many different Blotto strategies. However, it is unclear whether there is another symmetric equilibrium strategy, distinct from a Blotto strategy.

In the winner-takes-all payoff with three players, we proved that $(\tfrac{1}{2}, \tfrac{1}{2})$ is the unique symmetric equilibrium. Interestingly, while it is a best response against any common strategy of the other two players, it may not be a best response in the general case. For example, in the settings of Lemma~\ref{thm:main0}, consider the case $x>y>0.5$. In that case, it is more beneficial for the third player to play $z=y$ and receive $P_z=\frac{11}{24}$, rather than play $z=0.5$ and receive $P_z=\frac{5}{12}$. Further analysis may result in interesting insights.

In the winner-takes-all payoff with four or more players, we proposed an algorithm and implemented it in the game with four players. We presented a simple version with fixed tolerance $\gamma = 0.01$, with which the process stabilized after $1000$ iterations and returned a strategy that achieved a best-response payoff of $0.254$. However, an improved version with a decreasing tolerance $\gamma_t = \frac{0.05}{\sqrt{t}}$ was able to return a strategy with a best-response payoff of $0.252$, cutting $50\%$ of the gap toward the equilibrium payoff, after $5000$ iterations. Further optimization of the algorithm parameters may lead to even better results.

In addition to these questions, there are more potential variations to explore. A fundamental assumption in our model is that players allocate resources in advance, before the competition begins. Analyzing a game with dynamic strategies that allocate resources to stage $k$ only after knowing the results of stage $k-1$ would enrich the model. 

Furthermore, one could define an alternative payoff scheme in which there is a reward for winning the tournament (finishing first) and a penalty for losing it (finishing last). Another variation may consider using the Tullock mechanism to determine match outcomes, instead of the all-pay rule.

Finally, one may also extend the analysis from a symmetric game to an asymmetric game, where players have heterogeneous resource budgets. Apart from our two payoff schemes, a natural framework in this case is to assign players a common objective function that combines two components: a large prize for winning the tournament and smaller prizes for each match victory. In such a setting, it is plausible that stronger players would focus on maximizing their probability of winning the tournament, while weaker players would instead emphasize maximizing their expected total points.

\appendix

\section{Proof of Lemma~\ref{thm:main0}}

There are eight configurations, denoted A, B, C, D, E, F, G, and H. In configurations A, B, C, and D, Player 1 allocates $x$ against Player 2. In configurations A, C, E, and G, Player 2 allocates $y$ against Player 1. In configurations A, B, E, and F, Player 3 allocates $z$ against Player 1. The complete payoff table, containing all 32 case–configuration combinations (except the trivial case $x=y=z$), is provided below.

\begin{longtable}{|c|c|c|c|c|}
\hline
\textbf{case} & \textbf{configuration} & \textbf{$P_x$} & \textbf{$P_y$} & \textbf{$P_z$} \\ \hline
\endfirsthead 

\multicolumn{5}{c}{\textbf{Table of case-configuration combinations (Continued)}} \\
\hline
\textbf{case} & \textbf{configuration} & \textbf{$P_x$} & \textbf{$P_y$} & \textbf{$P_z$} \\ \hline
\endhead

$x > y > z$ & A & 0 & 0 & 1 \\ \hline
$x > y > z$ & B & 1/3 & 1/3 & 1/3 \\ \hline
$x > y > z$ & C & 0 & 0 & 1 \\ \hline
$x > y > z$ & D & 1/3 & 1/3 & 1/3 \\ \hline
$x > y > z$ & E & 1/3 & 1/3 & 1/3 \\ \hline
$x > y > z$ & F & 0 & 1 & 0 \\ \hline
$x > y > z$ & G & 1/3 & 1/3 & 1/3 \\ \hline
$x > y > z$ & H & 0 & 1 & 0 \\ \hline

$x = y > z$ & A & 0 & 0 & 1 \\ \hline
$x = y > z$ & B & 1/3 & 1/3 & 1/3 \\ \hline
$x = y > z$ & C & 0 & 0 & 1 \\ \hline
$x = y > z$ & D & 1/3 & 1/3 & 1/3 \\ \hline
$x = y > z$ & E & 1/3 & 1/3 & 1/3 \\ \hline
$x = y > z$ & F & 1/2 & 1/2 & 0 \\ \hline
$x = y > z$ & G & 1/3 & 1/3 & 1/3 \\ \hline
$x = y > z$ & H & 1/2 & 1/2 & 0 \\ \hline

$x > y = z > 1/2$ & A & 0 & 0 & 1 \\ \hline
$x > y = z > 1/2$ & B & 1/3 & 1/3 & 1/3 \\ \hline
$x > y = z > 1/2$ & C & 0 & 0 & 1 \\ \hline
$x > y = z > 1/2$ & D & 0 & 0 & 1 \\ \hline
$x > y = z > 1/2$ & E & 0 & 1 & 0 \\ \hline
$x > y = z > 1/2$ & F & 0 & 1 & 0 \\ \hline
$x > y = z > 1/2$ & G & 1/3 & 1/3 & 1/3 \\ \hline
$x > y = z > 1/2$ & H & 0 & 1 & 0 \\ \hline

$x > y = z = 1/2$ & A & 0 & 0 & 1 \\ \hline
$x > y = z = 1/2$ & B & 0 & 0 & 1 \\ \hline
$x > y = z = 1/2$ & C & 0 & 0 & 1 \\ \hline
$x > y = z = 1/2$ & D & 0 & 0 & 1 \\ \hline
$x > y = z = 1/2$ & E & 0 & 1 & 0 \\ \hline
$x > y = z = 1/2$ & F & 0 & 1 & 0 \\ \hline
$x > y = z = 1/2$ & G & 0 & 1 & 0 \\ \hline
$x > y = z = 1/2$ & H & 0 & 1 & 0 \\ \hline

\end{longtable}

\section{Proof of Remark~\ref{thm:main2}}

Assume that the other two players, denoted Opponent 1 and Opponent 2, allocate their resources independently as $(X, 1-X)$, where $X \sim f_X$, a continuous density function over $[0,1]$. Assume that I choose my allocation deterministically as $(y, 1-y)$, where $y \in [\frac{1}{2},1]$. 

We will prove that my payoff, denoted as $P_0$, is given by 
\[
P_0 = \frac{1}{6} + a_y (1 - a_y) \quad \text{where} \quad a_y = \frac{1-\Pr(1-y < X < y)}{2}.
\]

Without loss of generality, assume that I allocate $y$ against Opponent 1, and $1-y$ against Opponent 2.

The other two players allocate their resources independently as $(X, 1-X)$, where $X \sim f_X$. Therefore, against each of them, I meet in probability $\frac{1}{2}$ an allocation $X \sim f_X$, and in probability $\frac{1}{2}$ an allocation $X \sim f_{1-X}$.

In other words, the allocation I face is $X \sim g_X$, where $g_X = \frac{f_X+f_{1-X}}{2}$, and $g_X$ is symmetric around $\frac{1}{2}$, as $g_X = g_{1-X}$. Therefore, $\Pr(X > y \mid X \sim g_X) = \Pr(X < 1-y \mid X \sim g_X)$. We will mark $\Pr(X > y \mid X \sim g_X) = \Pr(X < 1-y \mid X \sim g_X)=a_y$, and $\Pr(1-y < X < y \mid X \sim g_X) = 1-2a_y$. 

We will mark the two independent realizations of the allocations of the other two players against me as $x_1$ and $x_2$. Meaning, $x_1, x_2 \sim g_X$. 

With probability $1$, the values $x_1$, $1-x_1$, $x_2$, $1-x_2$, $y$, and $1-y$ are all distinct, so there are no draws in the matches. Either one player wins two matches and, therefore, wins the tournament, or there is a triple draw in the tournament, meaning that each player wins one match.

I win the tournament if I win both matches. That is, if $y>x_1$ and $1-y>x_2$. This happens in probability $a_y(1-a_y)$.

There is a triple draw if $(A)$ I win against Opponent 2 who wins against Opponent 1 who wins against me, or $(B)$ I win against Opponent 1 who wins against Opponent 2 who wins against me.

$(A)$ occurs with probability $\Pr(1-y > x_2 \text{ and } 1-x_2 > 1-x_1 \text{ and } x_1 > y) = \Pr(x_1 > y \text{ and } x_2 <1-y) = \Pr(x_1 > y)\cdot\Pr(x_2 <1-y) = a_y \cdot a_y = a_y^2$

$(B)$ occurs with probability $\Pr(y > x_1 \text{ and } 1-x_1 > 1-x_2 \text{ and } x_2 > 1-y) = \Pr(0 < x_1 < y \text{ and } 1-y < x_2 < 1 \text{ and } x_1 < x_2)$. The event can be decomposed into a disjoint union of three sub-events:

\begin{itemize}
    \item The first sub-event is $0 < x_1 < 1-y$ and $1-y < x_2 < 1$.
    \item The second sub-event is $1-y < x_1 < y$ and $1-y < x_2 < y$ and $x_1 < x_2$.
    \item The third sub-event is $1-y < x_1 < y$ and $y < x_2 < 1$.
\end{itemize}

The first sub-event occurs in probability $a_y(1-a_y)$, the second sub-event occurs in probability $\frac{1}{2}\cdot(1-2a_y)^2$, and the third sub-event occurs in probability $(1-2a_y)a_y$. The sum of the probability of all options is $\frac{1}{2}-a_y^2$.

In conclusion, the probability of a triple draw (which results in a payoff of $\frac{1}{3}$ for me) is $\frac{1}{2}$, and my payoff is $P_0 = \frac{1}{6} + a_y (1 - a_y)$ for $a_y = \Pr(X > y \mid X \sim g_X)$. Since $g_X = \frac{f_X+f_{1-X}}{2}$, we have $a_y = \frac{1}{2}\Pr(X > y \mid X \sim f_X)+\frac{1}{2}\Pr(1-X > y \mid X \sim f_X)=\frac{1}{2}\Pr(X < 1-y  \text{ or } X > y \mid X \sim f_X)=\frac{1}{2}(1-\Pr(1-y<X<y \mid X \sim f_X))$. 

\hfill \(\square\)

\section{Discretized version of the returned strategy}

\begin{figure}[H] 
    \centering
    \includegraphics[width=1\textwidth]{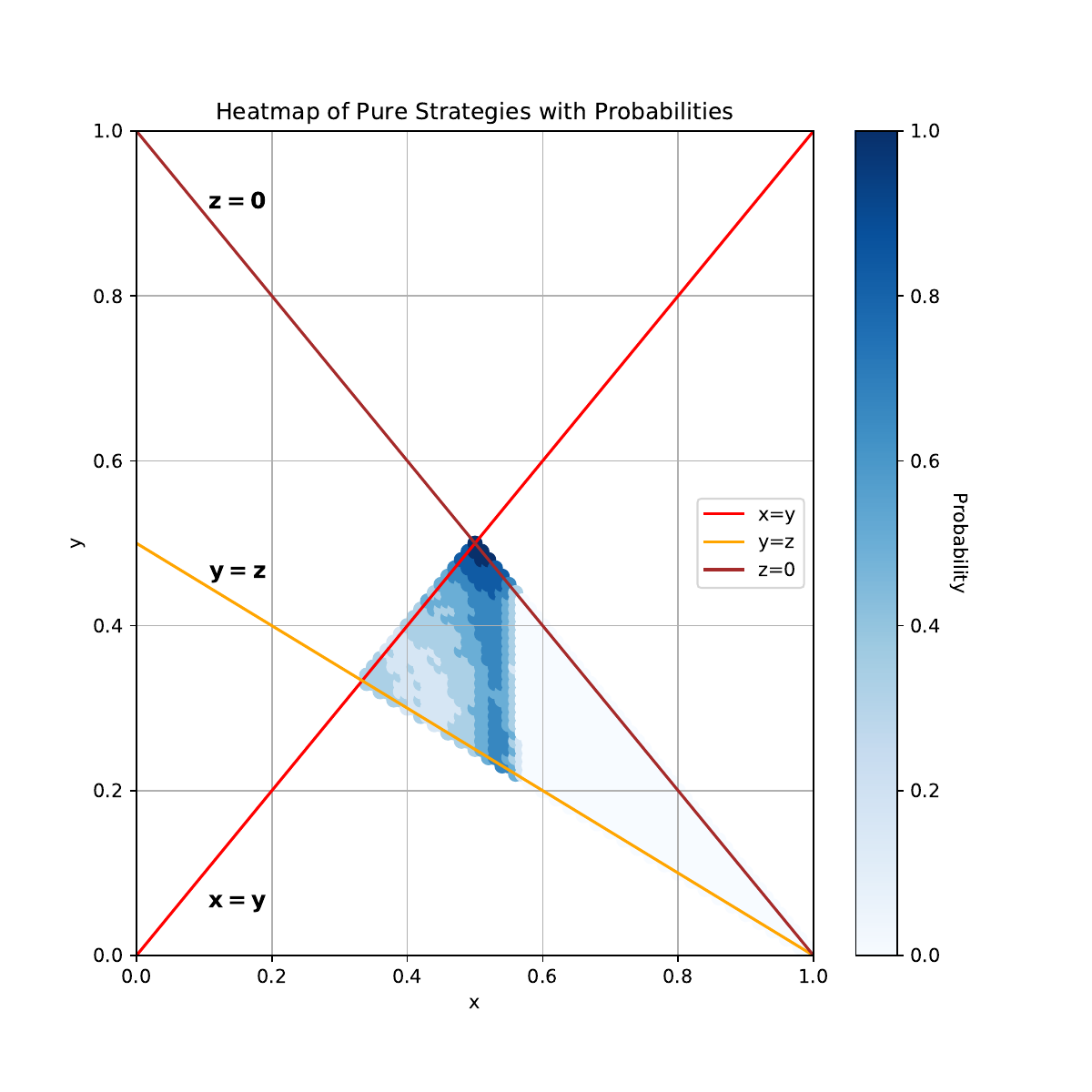}
    \caption{Normalized and discretized probability weights assigned by the returned strategy.}
    \label{fig:jumped_optimal_strategy}
\end{figure}

\end{document}